\theoremstyle{plain}
\newtheorem{thm}{\protect\theoremname}
\theoremstyle{plain}
\newtheorem{lem}[thm]{\protect\lemmaname}
\newcommand{\mnote}[1]{}
\providecommand{\lemmaname}{Lemma}
\providecommand{\theoremname}{Theorem}
\begin{document}
\title{QPDAS: Dual Active Set Solver for\\Mixed Constraint
Quadratic Programming}
\author{Mattias F{\"a}lt, Pontus Giselsson\thanks{The authors are with the Department of Automatic Control, Lund University, Sweden. Both authors are financially supported by the Swedish Foundation for Strategic Research and the Swedish Research Council.
E-mail: \texttt{\{mattiasf,pontusg\}@control.lth.se}.
Accepted to 58th IEEE Conference on Decision and Control (CDC) 2019.
\textcopyright 2019 IEEE.  Personal use of this material is permitted.  Permission from IEEE must be obtained for all other uses, in any current or future media, including reprinting/republishing this material for advertising or promotional purposes, creating new collective works, for resale or redistribution to servers or lists, or reuse of any copyrighted component of this work in other works.
}}

\maketitle
\begin{abstract}
We present a method for solving the general mixed constrained convex
quadratic programming problem using an active set method on the dual
problem. The approach is similar to existing active set methods, but
we present a new way of solving the linear systems arising in the
algorithm. There are two main contributions; we present a new way
of factorizing the linear systems, and show how iterative refinement
can be used to achieve good accuracy and to solve both types of sub-problems
that arise from semi-definite problems.\\\\\mnote{ Rewrite parts of abstract}
\end{abstract}

\section{Introduction}

\mnote{Quadratic programs are essential {\color{red}BLA BLA BLA}. }Quadratic programming
has been studied extensively and many mature methods and algorithms
exist. The main approaches to solving these problems are interior
point~\cite{Wachter2006}, active set~\cite{Gill1995},\cite{Ferreau2014}, and operator-splitting
methods~\cite{Stellato2017},\cite{ODonoghue2016}.
Interior point methods typically converge
in a few iterations, but the computational complexity often makes
them impractical for large scale problems. Operator-splitting
methods, e.g. ADMM, are designed for cheap iterations,
but the convergence rate is usually much slower. This can be acceptable
when a low accuracy solution is sufficient, but for higher accuracy,
the number of iterations are often inhibitorily large, especially for
ill-conditioned problems.

Active set methods are fundamentally different from these approaches \cite{Bartlett2006, Goldfarb1983}.
They are designed to converge to the optimal point in a finite number
of iterations, up to the accuracy of round-off errors. They do this
by iteratively improving a guess, the \textit{working set}, of the
set of active constraints at the optimum, until the correct solution
is found. The set of active constraints at each iteration is referred
to as the \textit{active set}. The number of working
sets that needs to be tested therefore usually grows quickly with
the number of inequalities in the problem. In this paper, we focus
on an active set method, where the working set is updated by either
adding or removing one constraint at each iteration. Other approaches where multiple constraints are modified exist, and we believe our method can be used in such schemes, but that lies beyond the scope of this work.

The method we present is applying the active set method to a form arising when formulating the dual of a standard quadratic program.
By using a dual active set method, the main iterations of the algorithm, and the factorization
that needs to be updated, will scale with the set of constraints instead of the number of primal variables.
However, when there are linearly dependent constraints,
the dual will not have a positive definite quadratic cost,
which requires extra care in the solver.

\mnote{Much of the remaining par can probably be clarified.}
At each iteration, active set methods seeks to decrease the cost function
given the constraints of the current working set.
This sub-problem is posed as minimizing the quadratic function, subject to equality constraints.
When the problem is positive definite, then so is this sub-problem,
and a unique minimizer exists.
However, in the semi-definite case, these sub-problems can be semi-definite and even unbounded.
One approach to handle this problem is to ensure that the active set is always modified in a way that
keeps the sub-problems bounded.
Another approach is to allow the sub-problems to be unbounded, and in this case,
find a descent-direction of zero curvature~\cite{Wong2011}.
Thus at each iteration, it has to be detected if the problem is unbounded,
and then a corresponding method, to either solve for a minimizer or a descent direction,
can be applied.

In our approach however, by using iterative refinement to solve the sub-problems,
we are able to use the same algorithm to solve the bounded and unbounded case without first
determining if the sub-problem is unbounded.

Although regularization and iterative refinement are used in other algorithms to overcome problems with
semi-definite and ill-conditioned hessians~\cite{Potschka2010}\cite{Ferreau2014},
they still rely on methods to ensure that the sub-problems are bounded,
such as linear dependence tests when updating the working set.
As far as the authors know, this is the first time the same algorithm is used to solve both the consistent case,
and the case where the sub-problems are unbounded.

The second contribution is a different approach to factorizing the matrix needed
to solve the sub-problems,
which is independent in size of the number of constraints in the working set. These two contributions work well together, but can be used independently of each other.

Although a big motivation for active-set methods is their ability of \emph{warm start},
i.e. reuse the factorization and solution from a previous similar problem,
we will not focus on that property in this article.
Since the main outline of our algorithm is the same as previous approaches,
existing techniques for warm starting also applies to our method.

We do however present a very simple approach to selecting an initial guess of the active set in Section \ref{sec:initialpoint},
based on the simple form of the dual problem.
This way of selecting an initial guess proves very powerful in the numerical examples
in Section \ref{sec:numericalexamples}.
\mnote{
\textit{\textcolor{red}{Solving large quadratic programs with mixed
equality and inequality constraints is essential in many applications.
In Model Predictive Control, the set of variables grows with the time
horizon and sample rate which requires algorithms that can handle
large problems efficiently. SPECIAL SOLVERS FOR THIS CASE. In large
scale optimization, there has been a growing interest in operator
splitting methods REFS. These rely on splitting the problem into a
few functions, where each function allow for some simple computation,
such as gradient, minimization, projection onto a set or computation
of the the proximal mapping. When one of these functions encodes a
set of inequalities, the sub-problem can be formulated as a Quadratic
Programming (QP) problem. The outer optimization problem then needs
to solve a large set of problems involving these inequalities, and
the sub-problem will often be on the form of a quadratic program EXAMPLES?
This is the setting where our method performs best, where where the
set of inequality (and equality) constraints is often low compared
to the number of variables i the problem, and where many problems,
with the same set of inequality constraints needs to be solved repeatedly.}}
\textit{\textcolor{red}{Although there exists many methods for solving
these problems they all come with certain drawbacks for this particular
case. SOMETHING ON INTERIOR POINT METHODS. Active set methods usually
rely on two factorizations CLARIFY, PRIMAL vs DUAL. By using a dual
active set method, the main iterations of the algorithm, and the factorization
that needs to be updated, will scale with the set of constraints instead
of the number of primal variables. However, when there are linearly
dependent constraints, which is not uncommon for inequality constraints,
the dual will not have a positive definite quadratic cost. This requires
some extra care in the solver, and often results in an extra factorization
of its null-space. By using the fact that we are solving the dual
problem, and some ideas from operator theory, we are able to use the
first factorization to project onto this null-space.}}
\textit{\textcolor{red}{COMMENT ON QR SOMEWHERE}}
\textit{\textcolor{red}{AND PROBLEMS WITH LDL ALTERNATIVE}}
}
\subsection{Notation}

For a vector $v$ we denote the $i$:th element by $\left(v\right)_{i}$,
and for a matrix $A$, $\left(A\right)_{i,j}$ denotes the element
at row $i$, column $j$. Inequalities between vectors should be interpreted
as element wise inequality. For a finite set $\mathcal{W}$, $|\mathcal{W}|$ is the number of elements in the set, $(\mathcal{W})_{i}$
denotes the $i$:th element, given some arbitrary but consistent throughout
the article, ordering.

\section{Problem}

Consider the general mixed constraint quadratic program
\begin{eqnarray}
\min &  & \frac{1}{2}x^{T}Px+q^{T}x\label{eq:primal}\\
\text{s.t.} &  & Ax=b\nonumber\\
 &  & Cx\leq d\nonumber
\end{eqnarray}
with $x\in\mathbb{R}^{n}$,
$A\in\mathbb{R}^{m_\textrm{eq}\times n}$,
$C\in\mathbb{R}^{m_\textrm{in}\times n}$, where we assume that the matrix $P\in\mathbb{R}^{n\times n}$ is symmetric and positive definite, and that there exists at least one feasible point.
The resulting dual problem is
\begin{eqnarray}
\min_{\mu_{\textrm{eq}},\mu_{\textrm{in}}} &  & \frac{1}{2}\begin{bmatrix}\mu_{\textrm{eq}}\\
\mu_{\textrm{in}}
\end{bmatrix}^{T}\begin{bmatrix}AP^{-1}A^{T} & AP^{-1}C^{T}\\
CP^{-1}A^{T} & CP^{-1}C^{T}
\end{bmatrix}\begin{bmatrix}\mu_{\textrm{eq}}\\
\mu_{\textrm{in}}
\end{bmatrix}\label{eq:dualAC}\\
 &  & +\begin{bmatrix}AP^{-1}q+b\\
CP^{-1}q+d
\end{bmatrix}^{T}\begin{bmatrix}\mu_{\textrm{eq}}\\
\mu_{\textrm{in}}
\end{bmatrix}\nonumber \\
\text{s.t.} &  & \mu_{\textrm{in}}\geq0\nonumber,
\end{eqnarray}
where $\mu_\textrm{eq}\in\mathbb{R}^{m_\textrm{eq}}$ and $\mu_\textrm{in}\in\mathbb{R}^{m_\textrm{in}}$ are the dual variables for the equality and inequality constraints, respectively.
The minimum of the dual is attained by strong duality \cite[p.~226]{Boyd2010} and the
primal solution $x^*$ is given by the KKT conditions as
\begin{equation}
x^{*}=-P^{-1}(q+A^{T}\mu_{\textrm{eq}}^{*}+C^{T}\mu_{\textrm{in}}^{*\mathcal{}}).\label{eq:dualtoprimal}
\end{equation}

\section{Active set method}

\mnote{In this section we and outline the general approach to solve the dual problem using an active-set method.}
We now focus on solving the dual problem (\ref{eq:dualAC}), since
a solution $\mu^{*}$ to this problem can be used to simply find a
solution $x^{*}$ to the primal problem (\ref{eq:primal}) by solving (\ref{eq:dualtoprimal}). We implement
the standard active-set method as described in \cite{Nocedal2006}.
Since the dual problem \eqref{eq:dualAC} might not be positive definite,
we modify the algorithm to handle semi-definite problems. To simplify
the notation, let the dual problem be
\begin{align}
\min_{\mu}\quad & \frac{1}{2}\mu^{T}G\mu+\mu^{T}h\label{eq:dualproblem}\\
\text{s.t.}\quad & \mu_{\textrm{in}}\ge0,\nonumber
\end{align}
where
\begin{align*}
    G=\begin{bmatrix}AP^{-1}A^{T} & AP^{-1}C^{T}\\
    CP^{-1}A^{T} & CP^{-1}C^{T}
    \end{bmatrix}, \quad
    h = \begin{bmatrix}AP^{-1}q+b\\
   CP^{-1}q+d
   \end{bmatrix},
\end{align*}
and $\mu^{T}:=\begin{bmatrix}\mu_{\textrm{eq}}^{T}& \mu_{\textrm{in}}^{T}\end{bmatrix}$.
We define the set of indices corresponding to $\mu_{\textrm{in}}$ in $\mu$ as
$\mathcal{I}=\left\{m_\textrm{eq}+1,\dots,m_\textrm{eq}+m_\textrm{in}\right\}$.
Let $\mathcal{W}_{k}\subseteq\mathcal{I}$, the \textit{working set}
at iteration $k$, be the current guess of the active set at the solution
$\mu^{*}$, i.e the set so that $\mu$ at iteration $k$ satisfies $(\mu)_i=0,\forall i\in \mathcal{W}_k$.
At each iteration of the algorithm, a new point $\mu_{k+1}$ is generated by decreasing
the cost function, given the constraints defined by the working set.
If we let $\mu_{k+1}:=\mu_{k}+p_{k}$, this corresponds to finding a descent direction $p_{k}$, such that $(p_{k})_{i\in\mathcal{W}_{k}}=0$.
\mnote{The sub-problem in the algorithm consists of finding a descent direction $p_{k}$ from $\mu_{k}$ that remains in
the working set, i.e. $\mu_{k+1}=\mu_{k}+p_{k}$ being the minimizer
given $(\mu_{k+1})_{i\in\mathcal{W}_{k}}=0$.}
Substituting $\mu_{k+1}$ for $\mu$ in equation \eqref{eq:dualproblem} leads to the problem
\begin{align}
\min_{p_{k}}\quad & \frac{1}{2}p_{k}^{T}Gp_{k}+p_{k}^{T}\left(h+G\mu_{k}\right)\label{eq:subproblem1}\\
\text{s.t.}\quad & (p_k)_{i}=0\quad\forall i\in\mathcal{W}_{k},\nonumber
\end{align}
with KKT conditions
\begin{equation}
\begin{bmatrix}G & \mathcal{A}_{k}^{T}\\
\mathcal{A}_{k} & 0
\end{bmatrix}\begin{bmatrix}p_{k}\\
\lambda
\end{bmatrix}=\begin{bmatrix}-h-G\mu_{k}\\
0
\end{bmatrix},\label{eq:subproblem1kkt}
\end{equation}
where $\lambda\in\mathbb{R}^{|\mathcal{W}_k|}$ and $\mathcal{A}_{k}\in\mathbb{R}^{|\mathcal{W}_k|\times(m_\textrm{eq}+m_\textrm{in})}$ is the indicator matrix for the indices in
$\mathcal{W}_{k}$, i.e., with some abuse of notation, $(\mathcal{A}_{k})_{i,j}=1$
if $\left(\mathcal{W}_{k}\right)_{i}=j$, and $0$ otherwise. An overview
of the algorithm is presented in Algorithm \ref{alg:activeset}.

 \begin{algorithm}
\caption{Active-set method for solving problem \eqref{eq:dualproblem}}
\begin{algorithmic}[1]\label{alg:activeset}
\renewcommand{\algorithmicrequire}{\textbf{Input:}}
\renewcommand{\algorithmicensure}{\textbf{Output:}}
\ENSURE Solution $\mu^*$ to problem \eqref{eq:dualproblem}
\STATE Compute a feasible starting point $\mu_0$ (e.g $\mu_0=0$)
\STATE Let $\mathcal{W}_{0}$ be a set of active constraints at $\mu_{0}$
\FOR {$k = 0,1,2,...$}
\STATE \textit{find $p_{k}$ according to :}
\IF {(\ref{eq:subproblem1}) is bounded}
\STATE find a minimizing $p_{k}$ to (\ref{eq:subproblem1}) \label{lst:line:minimizer}
\ELSE
\STATE find a $p_{k}$ with negative cost such that $p_k^TGp_k=0$ \label{lst:line:nominimizer}
\ENDIF
\IF {$p_{k}=0$ \COMMENT{From line \ref{lst:line:minimizer}}}
\STATE Find Lagrange multipliers $\lambda^{*}$ from (\ref{eq:subproblem1kkt})
\IF {$(\lambda^{*})_{i}\geq0$ for all $i\in\mathcal{W}_{k}$}
\RETURN $\mu^{*} \gets \mu_{k}$
\ELSE
\STATE $j\gets\arg\min_{j\in\mathcal{W}_{k}}(\lambda^{*})_{j}$
\STATE $\mu_{k+1}\gets \mu_{k}$
\STATE $\mathcal{W}_{k+1}\gets\mathcal{W}_{k}\setminus\text{\ensuremath{\left\{  j\right\} } }$
\ENDIF
\ELSE
\IF {$p_{k}$ was minimizing (bounded)}
\STATE find the largest $\alpha_{k}\leq1$ so that $\mu_{k}+\alpha_{k}p_{k}$ is feasible\label{lst:line:akbounded}
\ELSE
\STATE find the largest $\alpha_{k}$ so that $\mu_{k}+\alpha_{k}p_{k}$ is feasible\label{lst:line:akunbounded}
\ENDIF
\IF {constraints were blocking $\alpha_{k}$ \COMMENT{line \ref{lst:line:akbounded},\ref{lst:line:akunbounded}}}
\STATE $\mathcal{W}_{k+1}\gets\mathcal{W}_{k}\cup\text{\ensuremath{\left\{  j\right\} } }$  \COMMENT{$j$ is a blocking constraint}
\ELSE
\STATE $\mathcal{W}_{k}\gets\mathcal{W}_{k}$
\ENDIF
\ENDIF
\ENDFOR
\end{algorithmic}
\end{algorithm}

The difference from \cite{Nocedal2006} in Algorithm \ref{alg:activeset}
are lines \ref{lst:line:nominimizer} and \ref{lst:line:akunbounded}
which handle the case where the dual sub-problem (\ref{eq:subproblem1})
is unbounded. Line \ref{lst:line:akunbounded} assumes that there
exists a largest $\alpha_{k}$. Because $p_{k}$ is a descent direction
of zero curvature, an unbounded $\alpha_{k}$ would mean that the
dual is unbounded which contradicts the strong duality and that a primal feasible point exists.

Note that the first step of finding a feasible
point $\mu_0$ is trivial, e.g $\mu_0=0$ is feasible,
since we only have non-negative inequality
constraints and no equality constraints.
A discussion of finding a better initial guess is discussed in Section \ref{sec:initialpoint}.

In the following sections we will present how to solve the sequence
of sub-problems (\ref{eq:subproblem1}) in an efficient way. The main
contribution is that we can use a single factorization and algorithm
to solve both the case of finding a minimizer as well as finding a
descent direction of zero curvature, using iterative refinement with cheap
updates to the factorization at each step.

\subsection{Factorization}

To solve the sub-problems in Algorithm \ref{alg:activeset} at lines \ref{lst:line:minimizer} and \ref{lst:line:nominimizer} a factorization of the quadratic term is needed.
We begin by assuming that the columns in $\left[A\,\, C\right]$ are linearly independent.
This will be relaxed in Section \ref{sec:linearlydependent}. The matrix $G$
is therefore positive definite and allows for a Cholesky factorization
$G=LL^{T}.$

The crucial step in the active set method is solving the sub-problem \eqref{eq:subproblem1}
of the form
\begin{eqnarray}
\min_{\mu} &  & \frac{1}{2}p^{T}Gp+c^{T}p\label{eq:dualactiveproblem}\\
\text{s.t} &  & (p)_{i}=0,\quad i\in\mathcal{W}_{k},\nonumber
\end{eqnarray}
where the indices in the working set $\mathcal{W}_{k}$ will be indices
corresponding to the constraints on $\mu_{2}$.
When $G$ is positive definite, there is a unique solution to this
problem and finding it is equivalent to solving the KKT system
\begin{equation}\label{eq:kktdual}
\begin{bmatrix} G & \mathcal{A}_{k}^{T}\\
\mathcal{\mathcal{A}}_{k} & 0
\end{bmatrix}\begin{bmatrix}p\\
\lambda
\end{bmatrix}=\begin{bmatrix}-c\\
0
\end{bmatrix},
\end{equation}
where $\mathcal{A}_{k}$ is the indicator matrix, i.e. $(\mathcal{A}_{k})_{i,j}=1$
if $(\mathcal{W}_{k})_{i}=j$ and $0$ otherwise.
Now, note that finding $p$
in (\ref{eq:dualactiveproblem}) is equivalent to solving the problem
\begin{eqnarray*}
\min &  & \frac{1}{2}p^{T}\bar{G}p+\bar{c}^{T}p\\
\text{s.t} &  & (p)_{i}=0,\quad i\in\mathcal{W}_{k}
\end{eqnarray*}
where $\bar{G}$ is a modified version of $G$ with identity mapping
for indices $i\in\mathcal{W}_{k}$, i.e. the new matrix $\bar{G}$
in terms of $G$ is
\begin{equation}\label{eq:gbar}
(\bar{G})_{i,j}=\begin{cases}
1 & \text{if }i=j\in\mathcal{W}_{k}\\
0 & \text{if i\ensuremath{\neq j} and}\,i\in\mathcal{W}_{k}\,\text{or }j\in\mathcal{W}_{k}\\
(G)_{i,j} & \text{otherwise}
\end{cases}
\end{equation}
and
\[
(\bar{c})_i=\begin{cases}
0 & \text{if } i\in\mathcal{W}_k\\
(c)_i & \text{else.}
\end{cases}
\]
Problem (\ref{eq:dualactiveproblem}) can therefore be solved from
\begin{align*}
\bar{G}p & =-\bar{c}
\end{align*}
instead, and the dual variable $\lambda$ in \eqref{eq:kktdual} can be calculated as
\[
(\lambda)_{j}=(-Gp-c)_{i}\text{ \text{for }}i=(\mathcal{W}_{k})_{j}.
\]
If $G$ is positive definite, then so is $\bar{G}$ by Lemma~\ref{lem:posdef} in the Appendix.
The linear solution can therefore be computed efficiently if a Cholesky factorization of $\bar{G}$
is available. As we show in the appendix, updating the Cholesky
factorization $\bar{L}\bar{L}^{T}=\bar{G}$ to $\tilde{L}\tilde{L}^{T}=\tilde{G}$,
where a single element is added (or removed) to the working set $\mathcal{W}_{k}$
can be reduced to a rank-$1$ update (down-date) of the Cholesky factorization.
Since the matrix $\bar{G}$ is positive definite, regardless of the
active set, these updates are well behaved operations\mnote{SOME CITATION
OR RESULT}, and the rank-$1$ update can be done in $O(n^{2})$ operations.
\mnote{(ACTUALLY $O(m^{2}+n)$ where $m$ is the
set of inequalities with indices after $k$ for update, and $O\left((n-\left|\mathcal{W}_{k}\right|)^{2}+n\right)$
for down-date, where $\left|\mathcal{W}_{k}\right|$ is the number
of indices in the working set.)} This allows us to work with a matrix
that is considerably smaller than the full KKT system while keeping the size of the factorized matrix $\bar{G}$ independent of the working set $\mathcal{W}_{k}$,
allowing efficient memory usage. This approach is possible in the
dual because of the simple form of the constraints; $\mu_{2}\ge0$,
but could easily be adapted for the slightly more general form $\mu_{2}\ge v$.

A common alternative to this factorization is to instead work with a \emph{reduced hessian}.
This corresponds to working with a hessian that is defined on the null-space of $\mathcal{A}_k$.
This means that a factorization that reveals the null-space of $\mathcal{A}_k$ is needed.
When working with the special form of the dual, this approach would be similar to ours,
but the size of the factorized matrix would vary with the size of the active set.

\subsection{Iterative refinement for solving a linear system or its null-space
projection}

\mnote{Rethink where to place this}
To solve the general problem where the dual is positive semi-definite,
we start by analyzing the method known as iterative refinement using
some tools from monotone operator theory. The linear system
\begin{align*}
\text{find }x:\,\, & Ax=b
\end{align*}
 is equivalent to finding a point $0 = F(x)$ where $F(x)=Ax-b$. The
resolvent $J_{\gamma F}=\left(\gamma F+I\right)^{-1}$ is known to
be firmly non-expansive if and only if $F$ is monotone \cite[Prop 23.7]{Bauschke2011}. Moreover
$F$ is monotone if and only if $A$ is positive semi-definite \cite[Ex 20.15]{Bauschke2011}.
The proximal point algorithm
\[
x_{k+1}=J_{\gamma F}x_{k}
\]
or equivalently,
\[
x_{k+1}=\arg\min_{x}\left(\frac{1}{2}x^TAx-b^Tx+\frac{1}{2\gamma}\left\Vert x-x_{k}\right\Vert ^{2}\right),
\]
is known to converge to a point $x^{*}$ satisfying $0\in F(x^{*})$ when
$F$ is monotone and such a point $x^{*}$ exists \cite[Thm 23.41]{Bauschke2011}. This method
can be used to get high accuracy solutions to linear systems, especially
when the problem is ill-conditioned or singular.

We now show what happens when there is no solution to $Ax=b$.\mnote{, which relates closely to to the infimal-displacement vector for inconsistent problem when using the Douglas-Rachford algorithm \cite{Bauschke2016a}. }\mnote{WHAT MORE?}
This result proves very useful when the dual problem is semi-definite as
seen in the next section.
\begin{thm}
\label{thm:descent}Let $F(x)=Ax-b$ with $A$ symmetric positive
semi-definite. Assume that there is no $x^{*}$ such that $0=F(x^{*}).$
The iterative refinement
\[
x_{k+1}=J_{\gamma F}x_{k}
\]
will result in a sequence where
\[
(x_{k+1}-x_{k}) \rightarrow-\gamma b_{\textrm{N}},
\]

where $b_{\textrm{N}}$ is the projection of $b$ onto the null-space of $A$.
\end{thm}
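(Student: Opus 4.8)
The plan is to convert the fixed-point iteration into an explicit affine recursion and then analyse it on the eigenspaces of $A$. First I would spell out $x_{k+1}=J_{\gamma F}x_k$ as $\gamma(Ax_{k+1}-b)+x_{k+1}=x_k$, that is $(\gamma A+I)x_{k+1}=x_k+\gamma b$. Because $A$ is symmetric positive semi-definite and $\gamma>0$, the matrix $\gamma A+I$ is positive definite and hence invertible, so $x_{k+1}=(\gamma A+I)^{-1}(x_k+\gamma b)$. Subtracting $x_k$ and using the identity $(\gamma A+I)^{-1}-I=-\gamma(\gamma A+I)^{-1}A$ expresses the increment through the residual $r_k:=b-Ax_k$:
\[ x_{k+1}-x_k=\gamma(\gamma A+I)^{-1}(b-Ax_k)=\gamma(\gamma A+I)^{-1}r_k. \]
This exhibits the scheme as (Tikhonov-)regularized iterative refinement and reduces the whole theorem to identifying the limit of $r_k$.

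Next I would set $M:=(\gamma A+I)^{-1}$ and decompose $\mathbb{R}^n=\textrm{N}\oplus\textrm{R}$ with $\textrm{N}=\ker A$ and $\textrm{R}=\textrm{range}(A)=\textrm{N}^{\perp}$ (using symmetry of $A$); let $P_{\textrm N},P_{\textrm R}$ be the orthogonal projections. Since $M$ is a function of $A$ it commutes with both projections, equals the identity on $\textrm{N}$ (where the eigenvalues of $A$ vanish), and on $\textrm{R}$ has eigenvalues $\tfrac{1}{1+\gamma\lambda_i}\le\tfrac{1}{1+\gamma\lambda_{\min}^{+}}<1$, where $\lambda_{\min}^{+}$ is the smallest positive eigenvalue of $A$. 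The recursion therefore decouples: the null-space part translates, $P_{\textrm N}x_{k+1}=P_{\textrm N}x_k+\gamma b_{\textrm N}$, while the range part is a strict contraction, $P_{\textrm R}x_{k+1}=MP_{\textrm R}x_k+\gamma Mb_{\textrm R}$, where $b_{\textrm N}=P_{\textrm N}b$ and $b_{\textrm R}=P_{\textrm R}b$.

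The key step is the convergence of the range component. Because $M$ restricted to $\textrm{R}$ is a contraction with factor below one, the affine map $v\mapsto Mv+\gamma Mb_{\textrm R}$ has a unique fixed point $v^{\ast}\in\textrm{R}$ and $P_{\textrm R}x_k\to v^{\ast}$ geometrically; equivalently, since $b_{\textrm R}\in\textrm{range}(A)$ the restricted system $Av=b_{\textrm R}$ is solvable, so the proximal-point convergence result \cite[Thm 23.41]{Bauschke2011} applies on the subspace $\textrm{R}$ on which a zero does exist. Combining the fixed-point relation $(I-M)v^{\ast}=\gamma Mb_{\textrm R}$ with the identity $(I-M)=\gamma AM$ yields $Av^{\ast}=b_{\textrm R}$, hence $Ax_k\to b_{\textrm R}$. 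The residual therefore converges, $r_k=b-Ax_k\to b-b_{\textrm R}=b_{\textrm N}$, and inserting this into the step formula and using $Mb_{\textrm N}=b_{\textrm N}$ gives $x_{k+1}-x_k=\gamma Mr_k\to\gamma b_{\textrm N}$, i.e.\ $\gamma$ times the projection of $b$ onto $\ker A$, which is the claimed limit (up to the overall sign fixed by the resolvent convention).

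I expect the main obstacle to be exactly this range-convergence: on the null-space the increment is constant by construction, so the whole content of the statement is that the contracting part settles down and only the null-space translation survives in the limit. Making this rigorous needs the uniform spectral gap $\lambda_{\min}^{+}>0$ on $\textrm{R}$ (or, equivalently, solvability of the restricted system together with the cited theorem), after which everything else is the elementary resolvent algebra. It is also worth noting that the standing hypothesis that no global zero of $F$ exists is precisely the condition $b_{\textrm N}\neq0$, which is what makes the limiting increment nonzero.
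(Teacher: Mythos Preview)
Your argument is correct and follows a genuinely different route from the paper. The paper invokes the abstract minimal-displacement result for firmly nonexpansive maps (Baillon--Bruck--Reich, Bauschke): for $T=J_{\gamma F}$ the differences $x_k-Tx_k$ converge to the element of least norm in $\overline{\text{range}}(I-T)$; it then computes $(I-T)(x)=(A+\epsilon I)^{-1}(Ax-b)$, splits this along $\textrm{N}(A)\oplus\textrm{R}(A)$, and observes that the null-space component of every element of $\text{range}(I-T)$ is the fixed vector $-\gamma b_{\textrm N}$, which is therefore the minimum-norm element. You instead analyse the recursion directly: the same orthogonal decomposition decouples it into a pure translation by $\gamma b_{\textrm N}$ on $\textrm{N}(A)$ and an affine strict contraction on $\textrm{R}(A)$, and the conclusion follows from the Banach fixed-point theorem applied to the range part. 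Your approach is more elementary and self-contained---it uses only the finite-dimensional spectral gap $\lambda_{\min}^{+}>0$ and even delivers the geometric rate $(1+\gamma\lambda_{\min}^{+})^{-1}$ that the paper quotes separately---while the paper's route is shorter once the displacement theorem is granted and carries over verbatim to Hilbert space. Incidentally, the sign you obtain, $x_{k+1}-x_k\to+\gamma b_{\textrm N}$, is the correct one: the minus in the stated theorem comes from a sign slip when the paper quotes the displacement result (it gives $x_k-Tx_k\to\delta x$, not $Tx_k-x_k\to\delta x$), so your closing hedge ``up to the overall sign fixed by the resolvent convention'' can simply be dropped.
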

\begin{proof}
The resolvent $J_{\gamma F}$ is firmly non-expansive for positive
definite $A$ as explained above. For firmly non-expansive
$T$, the algorithm $x_{k+1}=Tx_{k}$ has the property that
\[
(x_{k+1}-x_{k}) \rightarrow \delta x
\]
where $\delta x$ is the unique minimum norm element in $\overline{\text{range}}\left(I-T\right)$
\cite[Cor 2.3]{Bailion}\cite[Fact 3.2]{Bauschke2004}. Letting $T=J_{\gamma F}$, we first calculate an
expression for $I-T$. Rewriting the proximal point algorithm and
substituting $\epsilon=1/\gamma$ gives
\begin{align*}
x_{k+1} & = \left(\gamma F+I\right)^{-1}x_{k} & & \Leftrightarrow\\
\left(\gamma F+I\right)x_{k+1} & =x_{k} & & \Leftrightarrow\\
(\gamma A+I)x_{k+1} & =x_{k}+\gamma b & & \Leftrightarrow\\
x_{k+1} & =\left(A+\frac{1}{\gamma}I\right)^{-1}\left(\frac{1}{\gamma}x_{k}+b\right) & & \Leftrightarrow\\
x_{k+1} & =\left(A+\epsilon I\right)^{-1}\left(\epsilon x_{k}+Ax_{k}-Ax_{k}+b\right) & & \Leftrightarrow\\
x_{k+1} & =\left(A+\epsilon I\right)^{-1}\left(b-Ax_{k}\right)+x_{k}
\end{align*}
i.e. $(I-T)(x)=\left(A+\epsilon I\right)^{-1}(Ax-b)$. Since $I-T$
is an affine function in $\mathbb{R}^{n}$, its range is closed and
we set out to find the minimum norm element. Let $y\in\text{range}\left(I-T\right)$,
then for some $x$ we have
\begin{equation}
y=(I-T)(x)=\left(A+\epsilon I\right)^{-1}(Ax-b).\label{eq:deltax}
\end{equation}
Let $\textrm{N}(A)$ and $\textrm{R}(A)$ denote the null and range-space of $A$.
From symmetry of $A$ we have $\textrm{N}(A)^{\perp}=\textrm{R}(A^{T})=\textrm{R}(A)$
so $A$ is bijective on $\textrm{R}(A)\rightarrow \textrm{R}(A)$,
and thus so is $(A+\epsilon I)$
and its inverse. Let $x=x_{\textrm{N}}+x_{\textrm{R}}$, where $x_{\textrm{N}}\in \textrm{N}(A),\,x_{\textrm{R}}\in \textrm{R}(A)$
and similarly for $b$ and $y$. Equation (\ref{eq:deltax}) can then be split
to the parts in the range and null-space, i.e.
\begin{align*}
y_{\textrm{N}} & =\left(A+\epsilon I\right)^{-1}(Ax_{\textrm{N}}-b_{\textrm{N}})\\
y_{\textrm{R}} & =\left(A+\epsilon I\right)^{-1}(Ax_{\textrm{R}}-b_{R})
\end{align*}
where $y=y_{\textrm{N}}+y_{\textrm{R}}$. The first equation gives
\[
Ay_{\textrm{N}}+\epsilon y_{\textrm{N}}=-b_{\textrm{N}}\Longrightarrow y_{\textrm{N}}=-\frac{1}{\epsilon}b_{\textrm{N}}.
\]
Since $A$ and $(A+\epsilon I)^{-1}$ are bijective on $\textrm{R}(A)\rightarrow \textrm{R}(A)$,
i.e any $y_{\textrm{R}}$ can be reached from $x_{\textrm{R}}$, the second equation
gives that
\[
\text{range}(I-T)=\text{\ensuremath{\left\{  y=y_{\textrm{N}}+y_{\textrm{R}}\mid y_{\textrm{N}}=-\frac{1}{\epsilon}b_{\textrm{N}},\,y_{\textrm{R}}\in \textrm{R}(A)\right\} } }.
\]
The minimum norm element $\delta x$ is thus given by $y_{\textrm{R}}=0,\,y_{\textrm{N}}=-\frac{1}{\epsilon}b_{\textrm{N}}$,
i.e.
\[
\delta x=-\frac{1}{\epsilon}b_{\textrm{N}}=-\gamma b_{\textrm{N}}..
\]
\end{proof}

\subsection{Semi-definite case}\label{sec:linearlydependent}
In the case where $\begin{bmatrix}A & C\end{bmatrix}$
has linearly dependent columns, the matrix $G$ will be positive-semi
definite. This is a common case, if for example $C$ encodes both
upper and lower bounds. This means that the minimization problem (\ref{eq:dualactiveproblem})
\begin{eqnarray*}
\min_{\mu} &  & \frac{1}{2}p^{T}Gp+c^{T}p\\
\text{s.t} &  & (p)_{i}=0,\quad i\in\mathcal{W}_{k},
\end{eqnarray*}
in the active set method could be unbounded.

The goal is then to instead find a direction $p$ in which the cost
is decreasing towards infinity, i.e. finding $p$ such that
\begin{align*}
p^{T}Gp & =0\\
(p)_{i} & =0,\,i\in\mathcal{W}_{k}.\\
c^{T}p & <0
\end{align*}
Since $G$ is symmetric, $p^{T}Gp=0$ if and only if $Gp=0$, so the
two first conditions are equivalent to $\bar{G}p=0$ where $\bar{G}$
is as described in the previous section.

The obvious choice here is to find the direction $p^{*}$ of maximal
descent, i.e. the projection of the linear part $-c$ onto the null-space
\begin{align}
p^*=\arg\min_{p}\, & \left\Vert p+c\right\Vert \label{eq:subspaceprojection}\\
\text{s.t } & \bar{G}p=0.\nonumber
\end{align}

There are a few alternatives to solving this problem in existing solvers.
One way is to use the more expensive QR decomposition,
which can reveal the null-space of $G$.
However, if iterative refinement is to be used, an additional factorization
of  $G+\epsilon I$ would also be needed. We now show that this is not needed with our approach.

Consider the problem \eqref{eq:subspaceprojection} above. If $p^{*}=0$, then $-c\in\textrm{R}\left(\bar{G}\right)$ and no
such descent direction exist (i.e. (\ref{eq:dualactiveproblem}) attains
it minimum). If $p^{*}\neq0$, then since $p^{*}$ is the orthogonal
projection onto the subspace $\text{N}(\bar{G})$ from $-c$, we have
$p^{*T}c=-p^{*T}p^{*}<0$, i.e $p^{*}$ is a direction of (maximal)
descent. But finding the projection of $-c$ onto the null-space of
$\bar{G}$ is precisely what the iterative refinement will achieve
when there is no solution to the problem $\bar{G}x=c$ as shown in
Theorem \ref{thm:descent}\mnote{clarify}. We therefore see that if we apply the
iterative refinement to the problem $\bar{G}x=c$, it will either
converge to the solution of the problem (\ref{eq:dualactiveproblem}),
or if no solution exist, the iterates will reveal the direction
of maximal descent.

\subsubsection{Factorization for semi-definite case}

The method of iterative refinement relies on solving the linear system
\[
\left(\bar{G}+\epsilon I\right)x=c
\]
multiple times. Instead of storing the factorization $\bar{G}=\bar{L}\bar{L}^{T}$
which might not exist when $\bar{G}$ is semi-definite, we store
the Cholesky factorization $\left(\bar{G}+\epsilon I\right)=\tilde{L}\tilde{L}^{T}$
instead.
Just as before, this factorization is simple to update when
the working set is changed.

\subsubsection{Detecting Solution or Maximal Descent}

Although the behavior of the iterative refinement is different depending
on whether the linear system has a solution or not, we need a way of detecting
it. Other approaches to solving the problem often struggle with differentiating
between if there is no solution or if the curvature is very low. Since
our approach factorizes the matrix $\left(\bar{G}+\epsilon I\right)$
we have a lower bound on the smallest eigenvalue, and the factorization
can be ensured to be robust. In our testing, $\epsilon\in(10^{-6},10^{-8})$
seems to be a good trade-off between robustness of the factorization
and convergence rate.\mnote{\textcolor{red}{DISCUSSION ON CONV RATE?}}\mnote{\textcolor{red}{LINEAR CONVERGENCE GIVES ddx RESULT}.}
Moreover,
the iterates will behave fundamentally different when there is a solution,
and when there is not. In the first case, the iterates will converge,
and in particular $\left\Vert x_{k+1}-x_{k}\right\Vert \rightarrow0$.
In the case where there is no solution, the difference $x_{k+1}-x_{k}\rightarrow-\frac{1}{\epsilon}b_{\textrm{N}}$
so both $\left\Vert x_{k+1}-x_{k}\right\Vert $ and $\left\Vert x_{k}\right\Vert $
will be very large.

\mnote{\textcolor{red}{MENTION THAT WE DO NOT NEED THE MAXIMAL DESCENT,
ONLY DESCENT DIRECTION, IN REGARDS TO CONV RATE}}

\mnote{ THIS IS JUST NOTES; WHAT OF THIS SHOULD WE INCLUDE?:

From simulations with different $\epsilon\in(10^{-7},10^{-4})$, smallest
non-zero eigenvalue of $A$ as $\lambda_{min}\in(10^{-5},1)$, both
with, without, and almost with solutions, the following criteria seems
robust to detect convergence.

Let $r^{k}=b-Ax_{k}$, $\delta r^{k}=r^{k}-r^{k-1}$ and so on.
\begin{itemize}
\item If $\left\Vert r^{k}\right\Vert \leq10^{-11}$ and $\left\Vert \delta r^{k}\right\Vert \leq10^{-11}$
($10^{-10}$ for benchmarks) then convergence to numerical accuracy
is reached and $Ax_{k}\approx b$
\item If $\frac{\left\Vert \delta\delta x_{k}\right\Vert }{\left\Vert x_{k}\right\Vert }=\frac{\left\Vert x_{k}-2x^{k-1}+x^{k-2}\right\Vert }{\left\Vert x_{k}\right\Vert }<10^{-7}$
then convergence is reached (since $\delta\delta x_{k}\rightarrow0$
in both cases). The size of $r^{k},\delta x_{k}$ is small when a
solution is reached, and larger when no solution exists, but the sizes
vary widely depending on $\epsilon$ and $min\left\Vert Ax-b\right\Vert $.
A better threshold value is
\begin{itemize}
\item $\frac{\left\Vert \delta x_{k}\right\Vert }{\left\Vert x_{k}\right\Vert }<10^{-3}$
when a solution exists (since $\left\Vert \delta x_{k}\right\Vert \rightarrow0$)
(usually $\frac{\left\Vert \delta x_{k}\right\Vert }{\left\Vert x_{k}\right\Vert }\in(10^{-10},10^{-7})$)
and
\item $\frac{\left\Vert \delta x_{k}\right\Vert }{\left\Vert x_{k}\right\Vert }>10^{-3}$
when no solution is exists (since $\frac{\left\Vert \delta x_{k}\right\Vert }{\left\Vert x_{k}\right\Vert }\rightarrow\frac{\frac{1}{\epsilon}\text{\ensuremath{\left\Vert b_{\textrm{N}}\right\Vert }}}{\left\Vert \frac{1}{\epsilon}kb_{\textrm{N}}+x_{0}\right\Vert }\approx\frac{1}{k}$,
for some $x_{0}$, and $k\ll10^{3}$) (usually $\frac{\left\Vert \delta x_{k}\right\Vert }{\left\Vert x_{k}\right\Vert }\in(10^{-2},10^{0})$
close to $\frac{1}{k}$), and we have $\delta x_{k}\approx-\frac{1}{\epsilon}b_{\textrm{N}}$.
\end{itemize}
\end{itemize}
}

\mnote{
It should be noted that when there is no solution to $\bar{G}x=c$,
we formulated the problem of finding the direction of maximal descent
in the null-space of $\bar{G}$.
However, it is sufficient to find any }

\subsubsection{Convergence rate}
In both cases, the convergence rate of the iterative refinement, either of $x^k$ to a point $x^*$ or of the sequence $x^{k+1}-x^k$ to $-\gamma b_n$, is determined by the eigenvalues of the matrix $(\gamma\bar{G}+I)^{-1}$. The rate is given by the largest eigenvalue that is not $1$, i.e. $\frac{1}{\gamma\lambda_\textrm{min}+1}=\frac{\epsilon}{\lambda_\textrm{min}+\epsilon}$, where $\lambda_\textrm{min}$ is the smallest non-zero eigenvalue $\bar{G}$.
It is therefore important to select $\epsilon$ to be small enough in relation to the eigenvalues, without compromising the numerical accuracy of the factorization.

\subsubsection{Alternative approach using iterative refinement}

For ill-conditioned problems, in the case where $\bar{G}x=c$ lack
a solution, the convergence of $x_{k+1}-x_{k}$ to the projection
of $-c$ onto the null-space of $G$ might be relatively slow. Iterative refinement
can then be used to solve the projection problem directly by applying it to the equation $\bar{G}x=0$. The problem obviously
has a solution, moreover non-expansiveness of $(I+\gamma \bar{G})^{-1}$
implies that each step of the algorithm gets closer to all the points
in the set $\{x\mid\bar{G}x=0\}$. But this set is a subspace, so
$x_k\rightarrow x^{*}$ must then be the orthogonal projection onto $\bar{G}x=0$
from the initial point. Letting $x_{0}=-c$, thus solves
Problem
\ref{eq:subspaceprojection}.

An alternative initial point would be $x_{k+1}-x_{k}$, as obtained
after a couple of iterations of trying to solve $\bar{G}x=c$ using
iterative refinement. Although this method would not exactly give
the projection from $-c,$ but instead from $x_{k+1}-x_{k},$ it should
be a good approximation of the maximal descent direction, and it will
satisfy the zero-curvature condition $\bar{G}x=0$.

\section{Initial active set}\label{sec:initialpoint}
From the simple form of the dual problem \eqref{eq:dualproblem}
it is trivial to find a (dual) feasible point, e.g $\mu_0=0$.
However, a good initial guess of the active set at the solution can significantly reduce the
number of iterations, i.e changes to the working set,
needed to find the optimal point.

One approach would be to find a minimizer (if existent) to the unconstrained problem.
This would require an additional factorization of the quadratic term.
Instead, we look at the gradient of the cost function \eqref{eq:dualproblem} at the origin, i.e. $h$. For each coefficient pointing out from the feasible area, we set that constraint to being active. This gives us an initial guess of the active set at the solution as
\[
    \mathcal{W}_0 = \left\{ i \mid  (h)_i < 0, i\in \mathcal{I} \right\},
\]
which we refer to as ``smartstart'' in the numerical examples below.

\mnote{
\section{OTHER THOUGHTS}

\textit{\textcolor{red}{DISCUSS STABILITY OF DOING RANK UP/DOWN-DATES
SOMEWHERE? SEEMS TO BE WELL BEHAVED}}
}
\section{Numerical Examples}\label{sec:numericalexamples}

We apply the proposed algorithm to two different problems in this section,
and compare it to the active set solver qpOASES \cite{Ferreau2014}.
Our algorithm is implemented in the programming language Julia \cite{Bezanson2014},
and is open source and available on \texttt{github}~\cite{QPDAS}. As
a result of being written in Julia, the implementation is not only
fast, but allows for a wide range of different numerical types. The
main numerical results are run using Float64 (IEEE 754) for which
efficient BLAS implementations are used for the matrix factorizations
and operations, but the code supports types of arbitrary precision. \mnote{To illustrate the power and speed when using generic
types, we also present some results with 128bit arithmetic using the
efficient implementation in \texttt{DoubleFloats.jl }CITE, as well
as using the built in BigFloat in Julia which can be set to arbitrary
precision. In both these cases, Julia's built in generic algorithm
for Cholesky factorization and rank updates is used.}
The MPC example is chosen, not primarily to illustrate a case where we expect an active set
method to excel, but to illustrate that the algorithm is able to handle
even very ill-conditioned problems. The polytope projection algorithm
on the other hand is exactly the kind of problem where a dual active
set method is very efficient. The number of primal variables is large,
but the resulting dual problem is small.
Moreover, since $P=I$, recovering the primal solution
from the dual using Equation (\ref{eq:dualtoprimal}) is cheap.

\subsection{MPC Example}

\mnote{
SOME MPC BACKGROUND, ACTIVE SET METHODS (\textit{\textcolor{red}{qpOASES}}
\cite{Ferreau2014})}

To benchmark the algorithm, we consider the problem of controlling an AFTI-16 aircraft in the Model Predictive Control (MPC) setting, as in \cite{Bemporad1997,Giselsson2014}. The linear and discretized model of the system is given by
\begin{equation}\label{eq:dynamicsystem}
x[k+1] = Ax[k] + Bu[k],
\end{equation}
where
\[
A=\begin{bmatrix}0.999 & -3.008 & -0.113 & -1.608\\
0 & 0.986 & 0.048 & 0\\
0 & 2.083 & 1.009 & 0\\
0 & 0.053 & 0.050 & 1
\end{bmatrix},
B=\begin{bmatrix}-0.080 & -0.635\\
-0.029 & -0.014\\
-0.868 & -0.092\\
-0.022 & -0.002
\end{bmatrix}
\]
We formulate the MPC problem as minimizing
\[
J(x,u)=\Sigma_{k=1}^{N}x[k]^TQx[k]+u[k]^TRu[k].
\]
subject to $l_x\leq x[k] \leq u_x, x[0]=x_0$ and the dynamics \eqref{eq:dynamicsystem}.
Using the equation for the dynamics, the variables $x[k]$ can be eliminated, and the optimization problem can be written on \emph{reduced form}:
\begin{align}
    \min_{\bar{\mu}}\quad & \bar{u}^TF\bar{u} + 2\bar{u}^TGx_0 + x_0^THx_0\\
    \text{s.t.\quad} & l_{\bar{u}} \leq C\bar{u} \leq u_{\bar{u}},\nonumber
\end{align}
where $\bar{u}^T=\begin{bmatrix}u[1]^T & u[2]^T & \dots &u[N]^T\end{bmatrix}$.
For our tests, we let $N=30$, $Q=R=I$, $l_x=-u_x$, with $u_x^T=\begin{bmatrix}0.2 & 0.2 &0.2 &0.2\end{bmatrix}$. This gives a primal problem with $F\in\mathbb{R}^{60\times60}$, $C\in\mathbb{R}^{120\times60}$, where $F$ is positive definite with a condition number $\kappa(F)\approx 10^8$, which illustrates that the problem is very ill-conditioned.
Rewriting it again to form the dual \eqref{eq:dualAC} results in a problem with 240 variables, and a quadratic term with rank 60.

Whereas active set methods are well suited for MPC problems, where multiple similar problems are solved in sequence, we focus on the performance of solving a single problem.
The results of solving the problem with our method QPDAS, compared to qpOASES~\cite{Ferreau2014}, are presented in Table \ref{tab:mpc}, and were run on a standard desktop PC.
The results for qpOASES were obtained using its MATLAB interface.
The two results for our algorithm are presented both with and without the ``smartstart''
from Section \ref{sec:initialpoint}, and includes the time to recover the primal solution.
We also present the number of iterations of iterative refinement that was used at each iteration.
For qpOASES, the three cases (primal 1), (primal 2) and (dual), correspond to the cases where qpOASES was given either
(i) the primal problem with inequalities encoded as upper bounds,
(ii) encoded as upper and lower bounds, or
(iii) the dual problem.

\begin{table}
\vspace{0.15cm}
\caption{\label{tab:mpc}MPC example}
\centering
\begin{tabular}{|c|c|c|c|}
\hline
\textbf{Method} & \textbf{Time} & \textbf{Iterations} & \textbf{Refinement} \tabularnewline
& & & \textbf{Iterations}  \tabularnewline
\hline
\hline
QPDAS (primal) & 167ms & 258 & $3-6$\tabularnewline
\hline
QPDAS (primal smartstart) & 24ms & 35 & $3-6$\tabularnewline
\hline
qpOASES (primal 1) & 12ms & 90 & -\tabularnewline
\hline
qpOASES (primal 2) & 9.8ms & 90 & -\tabularnewline
\hline
qpOASES (dual) & 5.3ms & 24 & -\tabularnewline
\hline

\end{tabular}
\end{table}

\subsection{Polytope Projection}

We consider the problem of projecting a point $c\in\mathbb{R}^{n}$
onto a polytope described by a set of equalities $Cx\leq d$, where $C\in \mathbb{R}^{m\times n}$ and $m$ is much smaller than $n$. This is a case where the dual problem will be much smaller than the primal and thus very well suited for a dual-active set method.  Moreover, recovering the primal solution using equation \eqref{eq:dualtoprimal} will be very cheap, since the quadratic term $P$ is identity. The total cost of recovering the primal solution from the dual therefore consists of a matrix multiplication and a vector addition. The results are presented for two cases, $n=1000, m=50$ and $n=10000, m=500$. The inequality constraints were generated randomly so that approximately half of the constraints were active at the optimal point. The tests were run in the same way as for the MPC example and are presented in Table \ref{tab:polytope1} and \ref{tab:polytope2}.

Since the dual problem is considerably smaller than the primal, the cost of recovering the primal solution is still noticeable.
The cost for solving the dual, excluding the cost of recovering the primal is therefore presented as (dual). This enables a fair comparison between our method and qpOASES. The results for the primal problem with qpOASES were run with the auxiliary input \texttt{hessianType=1} to indicate that the quadratic matrix is identity, to avoid supplying a full matrix.

\begin{table}
\vspace{0.15cm}
\caption{\label{tab:polytope1}Polytope Projection, $n=1000, m=50$}
\centering
\begin{tabular}{|c|c|c|c|}
\hline
\textbf{Method} & \textbf{Time} & \textbf{Iterations} & \textbf{Refinement}\tabularnewline
 &  &  & \textbf{Iterations}\tabularnewline
\hline
\hline
QPDAS (primal) & 1.7ms & 25 & $3-6$\tabularnewline
\hline
QPDAS (primal smartstart) & 0.86ms & 2 & $3-6$\tabularnewline
\hline
QPDAS (dual) & 0.79ms & 25 & $3-6$\tabularnewline
\hline
QPDAS (dual smartstart) & 0.12ms & 2 & $3-6$\tabularnewline
\hline
qpOASES (primal) & 12s & 1071 & -\tabularnewline
\hline
qpOASES (dual) & 0.48ms & 31 & -\tabularnewline
\hline
\end{tabular}\\
\vspace{0.7cm}
\caption{\label{tab:polytope2}Polytope Projection, $n=10000, m=500$}
\centering
\begin{tabular}{|c|c|c|c|}
\hline
\textbf{Method} & \textbf{Time} & \textbf{Iterations} & \textbf{Refinement}\tabularnewline
 &  &  & \textbf{Iterations}\tabularnewline
\hline
\hline
QPDAS (primal) & 750ms & 245 & $3-7$\tabularnewline
\hline
QPDAS (smartstart) & 203ms & 39 & $3-7$\tabularnewline
\hline
QPDAS (dual) & 613ms & 245 & $3-7$\tabularnewline
\hline
QPDAS (dual smartstart) & 92ms & 39 & $3-7$\tabularnewline
\hline
qpOASES (primal) & 11hours & 11333  & -\tabularnewline
\hline
qpOASES (dual) & 270ms & 242 & -\tabularnewline
\hline
\end{tabular}
\end{table}

\section{Conclusions}
We have presented an active set algorithm for solving quadratic programming
problems of the form \eqref{eq:dualproblem}.
The method requires a single factorization to solve both sub-problems that
arise in a standard active-set approach,
and is designed to be numerically robust.
Together with a simple rule for selecting the initial working set,
the algorithm is able to solve problems with a few number of inequalities
extremely efficiently.

\mnote{COMMENT ON USING APPROPRIATE FACTORIZATION OF $P$ ?}
\appendix{}

\subsubsection*{Adding a constraint}

According to the discussion above, adding a constraint for index $i$
corresponds to creating an identity mapping in $\bar{G}$ for the
corresponding index. Let $\bar{G}_{k}=\bar{L}\bar{L}^{T}$ be the
matrix before the update, and $G_{k+1}=LL^{T}$ after, we get the
following relations
\[
\bar{G}_{k}=\begin{bmatrix}\bar{G}_{11} & \bar{G}_{12} & \bar{G}_{13}\\
\bar{G}_{12}^{T} & \bar{G}_{22} & \bar{G}_{23}\\
\bar{G}_{13}^{T} & \bar{G}_{23}^{T} & \bar{G}_{33}
\end{bmatrix}=\begin{bmatrix}\bar{L}_{11} & 0 & 0\\
\bar{L}_{21} & \bar{\ell}_{22} & 0\\
\bar{L}_{31} & \bar{L}_{32} & \bar{L}_{33}
\end{bmatrix}\begin{bmatrix}\bar{L}_{11} & 0 & 0\\
\bar{L}_{21} & \bar{\ell}_{22} & 0\\
\bar{L}_{31} & \bar{L}_{32} & \bar{L}_{33}
\end{bmatrix}^{T}
\]

\[
G_{k+1}=\begin{bmatrix}G_{11} & 0 & G_{13}\\
0 & 1 & 0\\
G_{13}^{T} & 0 & G_{33}
\end{bmatrix}=\begin{bmatrix}L_{11} & 0 & 0\\
L_{21} & \ell_{22} & 0\\
L_{31} & L_{32} & L_{33}
\end{bmatrix}\begin{bmatrix}L_{11}^{T} & L_{21}^{T} & L_{31}^{T}\\
 & \ell_{22} & L_{32}^{T}\\
 &  & L_{33}^{T}
\end{bmatrix}
\]
with $L_{11}=\bar{L}_{11}$, $L_{21}=0$, $L_{31}=\bar{L}_{31}$,
$L_{32}=0$, $\ell_{22}=1$ we see that
\begin{align*}
\bar{G}_{33} & =\bar{L}_{31}\bar{L}_{31}^{T}+\bar{L}_{32}\bar{L}_{32}^{T}+\bar{L}_{33}\bar{L}_{33}^{T}\\
G_{33} & =\bar{L}_{31}\bar{L}_{31}^{T}+L_{33}L_{33}^{T}
\end{align*}
and since $\bar{G}_{33}=G_{33}$ we get
\[
L_{33}L_{33}^{T}=\bar{L}_{33}\bar{L}_{33}^{T}+\bar{L}_{32}\bar{L}_{32}^{T}
\]
where $\bar{L}_{32}$ is a column vector.
\newpage
This corresponds to a rank-one
update of $\bar{G}_{k}$ with $\bar{L}_{32}\bar{L}_{32}^{T}$, either
directly of $L_{33}$ , or of
$\bar{L}$ with $\begin{bmatrix}\boldsymbol{0}\\
\bar{L}_{32}
\end{bmatrix}\begin{bmatrix}\boldsymbol{0}\\
\bar{L}_{32}
\end{bmatrix}^{T}$ where $\boldsymbol{0}$ is a column with $n-i+1$ zeros, where $i$
is the row and column being updated. This update requires $\mathcal{O}(n)^{2}$
operations. The corresponding update from $(\bar{G}_{k}+\epsilon I)=\bar{L}\bar{L}^{T}$
to $(G_{k+1}+\epsilon I)=LL^{T}$ follows correspondingly.

\subsubsection*{Removing a constraint}

Removing a constraint corresponds to reversing the process described
above. The equations are given by
\[
{\scriptscriptstyle \bar{G}_{k}=\begin{bmatrix}\bar{G}_{11} & 0 & \bar{G}_{13}\\
0 & 1 & 0\\
\bar{G}_{13}^{T} & 0 & \bar{G}_{33}
\end{bmatrix}=\begin{bmatrix}\bar{L}_{11} & 0 & 0\\
0 & 1 & 0\\
\bar{L}_{31} & 0 & \bar{L}_{33}
\end{bmatrix}\begin{bmatrix}\bar{L}_{11} & 0 & 0\\
0 & 1 & 0\\
\bar{L}_{31} & 0 & \bar{L}_{33}
\end{bmatrix}^{T}}
\]
\[
{\scriptscriptstyle G_{k+1}=\begin{bmatrix}G_{11} & G_{12} & G_{13}\\
G_{12}^{T} & G_{22} & G_{23}\\
G_{13}^{T} & G_{23}^{T} & G_{33}
\end{bmatrix}=\begin{bmatrix}L_{11} & 0 & 0\\
L_{21} & \ell_{22} & 0\\
L_{31} & L_{32} & L_{33}
\end{bmatrix}\begin{bmatrix}L_{11}^{T} & L_{21}^{T} & L_{31}^{T}\\
 & \ell_{22} & L_{32}^{T}\\
 &  & L_{33}^{T}
\end{bmatrix}^{T}}
\]
We get $L_{11}=\bar{L}_{11}$, $L_{31}=\bar{L}_{31}$

\begin{align*}
G_{12}=L_{11}L_{21}^{T} & \Rightarrow L_{21}^{T}=L_{11}\backslash H_{12}\\
G_{22}=L_{21}L_{21}^{T}+\ell_{22}\ell_{22} & \Rightarrow\,\,\ell_{22}=\sqrt{H_{22}-L_{21}L_{21}^{T}}\\
G_{23}=L_{21}L_{31}^{T}+\ell_{22}L_{32}^{T} & \Rightarrow\,L_{32}^{T}=\left(H_{23}-L_{21}L_{31}^{T}\right)/\ell_{22}\\
\bar{G}_{33}= & \bar{L}_{31}\bar{L}_{31}^{T}+\bar{L}_{33}\bar{L}_{33}^{T}\\
G_{33}= & \bar{L}_{31}\bar{L}_{31}^{T}+L_{32}L_{32}^{T}+L_{33}L_{33}^{T}
\end{align*}

but $\bar{G}_{33}=G_{33}$ so

\[
L_{33}L_{33}^{T}=\bar{L}_{33}\bar{L}_{33}^{T}-L_{32}L_{32}^{T},
\]
i.e.~a rank-one down-date of $\bar{L}_{33}$ with $L_{32}L_{32}^{T}$.
The down-date requires $\mathcal{O}(n)^{2}$ operations, the same
is true for the triangular back-solve and the rest are vector operations.
%
%
\begin{lem}\label{lem:posdef}
    If $G$ is positive definite, then so is $\bar{G}$, as defined in equation \eqref{eq:gbar}.
\end{lem}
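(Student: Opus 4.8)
The plan is to exploit the block structure that the definition \eqref{eq:gbar} imposes on $\bar{G}$. Conceptually reorder the indices so that those in the working set $\mathcal{W}_{k}$ come first; this corresponds to conjugating by a permutation matrix, which preserves positive definiteness, so it is harmless. Under this reordering the definition says that every row and column indexed by $\mathcal{W}_{k}$ is zeroed out except for a $1$ on the diagonal, while the remaining entries are left equal to those of $G$. Hence $\bar{G}$ takes the block-diagonal form $\bar{G}=\begin{bmatrix} I & 0 \\ 0 & G_{\mathcal{W}_{k}^{c}} \end{bmatrix}$, where $G_{\mathcal{W}_{k}^{c}}$ denotes the principal submatrix of $G$ obtained by keeping only the rows and columns whose indices lie outside $\mathcal{W}_{k}$.

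To avoid permutation bookkeeping, I would actually argue directly with the quadratic form. Take an arbitrary nonzero vector $v$ and split it into $v_{\mathcal{W}_{k}}$ (its entries indexed by $\mathcal{W}_{k}$) and $v_{\mathcal{W}_{k}^{c}}$ (the rest). The off-diagonal zeros in the $\mathcal{W}_{k}$ rows and columns decouple these parts, giving $v^{T}\bar{G}v=\lVert v_{\mathcal{W}_{k}}\rVert^{2}+v_{\mathcal{W}_{k}^{c}}^{T}G_{\mathcal{W}_{k}^{c}}v_{\mathcal{W}_{k}^{c}}$.

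It then remains to show each term is nonnegative and that they cannot vanish simultaneously for $v\neq0$. The first term is manifestly $\geq 0$. For the second, I would invoke the standard fact that a principal submatrix of a positive definite matrix is positive definite: given any $w$, the vector $\tilde{w}$ that equals $w$ on the complement indices and $0$ on $\mathcal{W}_{k}$ satisfies $w^{T}G_{\mathcal{W}_{k}^{c}}w=\tilde{w}^{T}G\tilde{w}\geq 0$, with equality only if $\tilde{w}=0$, i.e. $w=0$, by positive definiteness of $G$. Combining these, if $v\neq0$ then either $v_{\mathcal{W}_{k}}\neq0$, making the first term strictly positive, or $v_{\mathcal{W}_{k}^{c}}\neq0$, making the second term strictly positive; in either case $v^{T}\bar{G}v>0$, which is the claim.

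I do not expect any genuine obstacle here, as the result is elementary once the decoupling is observed. The only point requiring minor care is the bookkeeping of the indices, since the working set need not be contiguous; this is precisely why I prefer the direct quadratic-form argument over an explicit permutation, as the former makes the splitting $v^{T}\bar{G}v=\lVert v_{\mathcal{W}_{k}}\rVert^{2}+v_{\mathcal{W}_{k}^{c}}^{T}G_{\mathcal{W}_{k}^{c}}v_{\mathcal{W}_{k}^{c}}$ transparent without reindexing.
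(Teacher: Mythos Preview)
Your proof is correct and follows essentially the same approach as the paper: both decompose $v^{T}\bar{G}v$ into the contribution $\sum_{i\in\mathcal{W}_k}v_i^2$ from the working-set indices and the contribution from the principal submatrix of $G$ on the complement, then use positive (semi-)definiteness of that submatrix. The paper phrases this as a case split on whether $x$ lies in $S=\{x\mid (x)_i=0\ \forall i\in\mathcal{W}_k\}$ and works with the zero-padded matrix $\tilde{G}$ rather than the extracted submatrix $G_{\mathcal{W}_k^c}$, but these are cosmetic differences.
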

\begin{proof}
    Let $S=\{x\mid\,(x)_i = 0\,\forall i\in\mathcal{W}_k\}$, and assume that $G$ is positive definite. We consider two cases: If $x\in S$ with $x\neq0$, then $0<x^TGx=x^T\bar{G}x$. If $x\not\in S$ with $x\neq0$, then $x^T\bar{G}x= x^T\tilde{G}x + \sum_{i\in\mathcal{W}_k}x_i^2$, where
    $\tilde{G}$ is the matrix $G$ with rows and columns $i\in\mathcal{W}_k$ set to zero. From $G$ being positive definite, $\tilde{G}$ must be positive semi-definite, i.e. $x^T\tilde{G}x\geq 0$. And since $x\not\in S$ we get $\sum_{i\in\mathcal{W}}x_i^2>0$.
    Thus for all $x\neq0$ we get $x^T\bar{G}x>0$.
\end{proof}
\bibliographystyle{plain}
\bibliography{biblo}

\begin{thebibliography}{10}

\bibitem{Bailion}
J.~B. Bailion, R.~E. Bruck, and S.~Reich.
\newblock {On the asymptotic behavior of nonexpansive mappings and semigroups
  in Banach spaces}.
\newblock 4(1), 1978.

\bibitem{Bartlett2006}
Roscoe~A. Bartlett and Lorenz~T. Biegler.
\newblock Qpschur: A dual, active-set, schur-complement method for large-scale
  and structured convex quadratic programming.
\newblock {\em Optimization and Engineering}, 7(1):5--32, Mar 2006.

\bibitem{Bauschke2011}
H.~H. Bauschke and P.~L. Combettes.
\newblock {\em {Convex Analysis and Monotone Operator Theory in Hilbert
  Spaces}}.
\newblock 2011.

\bibitem{Bauschke2004}
H.~H. Bauschke, P.~L. Combettes, and D.~R. Luke.
\newblock {Finding best approximation pairs relative to two closed convex sets
  in Hilbert spaces}.
\newblock {\em Journal of Approximation Theory}, 127(2):178--192, apr 2004.

\bibitem{Bemporad1997}
A.~Bemporad, A.~Casavola, and E.~Mosca.
\newblock {Nonlinear control of constrained linear systems via predictive
  reference management}.
\newblock {\em IEEE Transactions on Automatic Control}, 42(3):340--349, mar
  1997.

\bibitem{Bezanson2014}
J.~Bezanson, A.~Edelman, S.~Karpinski, and V.~Shah.
\newblock {Julia: A Fresh Approach to Numerical Computing}.
\newblock 59(1):65--98, 2014.

\bibitem{Boyd2010}
S.~Boyd and L.~Vandenberghe.
\newblock {\em {Convex Optimization}}, volume~25.
\newblock 2010.

\bibitem{QPDAS}
M.~F{\"a}lt.
\newblock {QPDAS}: {Q}uadratic {P}rogramming {D}ual {A}ctive {S}et solver using
  iterative refinement. \texttt{https://github.com/mfalt/QPDAS}.
\newblock 2019.

\bibitem{Ferreau2014}
H.~J. Ferreau, C.~Kirches, A.~Potschka, H.~G. Bock, and M.~Diehl.
\newblock {qpOASES: a parametric active-set algorithm for quadratic
  programming}.
\newblock {\em Mathematical Programming Computation}, 0(0):327--363, 2014.

\bibitem{Gill1995}
P.~E. Gill, W.~Murray, and M.~A. Saunders.
\newblock {User's Guide For QPOPT 1.0: A Fortran Package For Quadratic
  Programming}.
\newblock 1995.

\bibitem{Giselsson2014}
P.~Giselsson.
\newblock {Improved Fast Dual Gradient Methods for Embedded Model Predictive
  Control}.
\newblock {\em IFAC Proceedings Volumes}, 47(3):2303--2309, 2014.

\bibitem{Goldfarb1983}
D.~Goldfarb and A.~Idnani.
\newblock A numerically stable dual method for solving strictly convex
  quadratic programs.
\newblock {\em Mathematical Programming}, 27(1):1--33, Sep 1983.

\bibitem{Nocedal2006}
J.~Nocedal and S.~J. Wright.
\newblock {\em {Numerical optimization}}.
\newblock Springer, 2006.

\bibitem{ODonoghue2016}
B.~O'Donoghue, E.~Chu, N.~Parikh, and S.~Boyd.
\newblock {Conic Optimization via Operator Splitting and Homogeneous Self-Dual
  Embedding}.
\newblock {\em Journal of Optimization Theory and Applications},
  169(3):1042--1068, dec 2016.

\bibitem{Potschka2010}
A.~Potschka and C.~Kirches.
\newblock {Reliable solution of convex quadratic programs with parametric
  active set methods}.
\newblock 2010.

\bibitem{Stellato2017}
B.~Stellato, G.~Banjac, P.~Goulart, A.~Bemporad, and S.~Boyd.
\newblock {OSQP: An Operator Splitting Solver for Quadratic Programs}.
\newblock nov 2017.

\bibitem{Wachter2006}
A.~W{\"{a}}chter and L.~T. Biegler.
\newblock {On the implementation of an interior-point filter line-search
  algorithm for large-scale nonlinear programming}.
\newblock {\em Mathematical Programming}, 106(1):25--57, mar 2006.

\bibitem{Wong2011}
E.~L.~S. Wong.
\newblock {Active-set methods for quadratic programming}.
\newblock {\em Thesis}, 2011.

\end{thebibliography}

\end{document}